\newtheorem{thm}{Theorem}[section]
\newtheorem{prop}[thm]{Proposition}
\def\s{{\mathfrak s}}
\def\t{{\mathfrak t}}
\def\cT{{\mathcal T}}
\def\cX{{\mathcal X}}
\def\bZ{{\mathbb Z}}
\def\Char{{\mathrm{Char}}}
\def\spc{{\mathrm{spin^c}}}
\def\Spc{{\mathrm{Spin^c}}}
\def\Short{{\mathrm{Short}}}
\def\del{{\partial}}
\def\im{{\text{im}}}
\def\mod{{\textup{mod} \;}}
\def\rk{{\mathrm{rk}}}
\newcommand{\into}{\hookrightarrow}
\begin{document}

\title[A note on applications of the $d$-invariant and Donaldson's theorem]%
{A note on applications of the $d$-invariant and Donaldson's theorem}

\author[Joshua Evan Greene]{Joshua Evan Greene}

\address{Department of Mathematics, Boston College\\ Chestnut Hill, MA 02467}

\email{joshua.greene@bc.edu}

\maketitle

\begin{abstract}
This note contains two remarks about the application of the $d$-invariant in Heegaard Floer homology and Donaldson's diagonalization theorem to knot theory.
The first is the equivalence of two obstructions they give to a 2-bridge knot being smoothly slice.  The second carries out a suggestion by Stefan Friedl to replace the use of Heegaard Floer homology by Donaldson's theorem in the proof of the main result of \cite{greene:2013-2} concerning Conway mutation of alternating links.
\end{abstract}

{\it\hfill Dedicated to the memory of Tim Cochran}


\section{Introduction.}

Donaldson's diagonalization theorem and Heegaard Floer homology have led to great success in knot theory.  In this note, we focus on two specific applications of these tools to knot concordance and mutation that appear in the literature.  We show how in both cases they can be used interchangeably towards the same end.  The moral is that for the applications considered herein, the $d$-invariant in Heegaard Floer homology simply repackages the information already carried by Donaldson's theorem.   This is not unexpected in light of the close relationship between them (see \cite[Section 9]{os:absgr}).

We first briefly recall both tools at work. Donaldson's theorem asserts that if $Z$ is a closed, oriented, smooth 4-manifold whose intersection pairing $Q_Z$ is definite, then $H_2(Z;\bZ)/\mathrm{Tors}$ admits an orthonormal basis with respect to $Q_Z$ \cite[Theorem 1]{d:thma}.  The $d$-invariant is a highly useful invariant defined by Ozsv\'ath and Szab\'o in Heegaard Floer homology.  It is modeled on the $h$-invariant defined by Fr\o yshov in Seiberg Witten Floer homology.  It assigns a rational number $d(Y,\t)$ to a closed, oriented 3-manifold $Y$ equipped with a torsion $\spc$ structure $\t$.

To set the stage for the main results, we recall how both of these tools can be used in order to prove that the pretzel knot $P(-3,5,7)$ is not smoothly slice.  By contrast, this knot is topologically slice, since it has trivial Alexander polynomial \cite{fq:book}.  The existence of topologically slice knots that are not smoothly slice was a sensational early application of Donaldson's and Freedman's work.  According to the paper of Cochran and Gompf, the line of argument using Donaldson's theorem is due to Casson \cite[\S1]{cg:1988}.  The line of argument using Heegaard Floer homology is standard by now.  As we shall see, the two proofs are slight variations of one another.  

The starting point for both proofs is the following observation.  If a knot $K$ bounds a disk $D$ smoothly and properly embedded in $D^4$, then the double cover of $D^4$ branched along $D$ is a smooth, compact 4-manifold $\Sigma(D)$ with the $\bZ / 2 \bZ$ (and hence rational) homology groups of a ball, and its boundary $\Sigma(K)$ is the double cover of $S^3$ branched along $K$ \cite[Lemma 2]{cg:cobordism}. For the case $K = P(-3,5,7)$, the manifold $\Sigma(K)$ is the Brieskorn sphere $-\Sigma(3,5,7)$.  This space bounds a smooth, compact 4-manifold $X$ obtained by plumbing disk-bundles over spheres and for which $(H_2(X;\bZ),Q_X)$ is isometric to the unimodular, definite lattice $D_{12}^+$ (see \cite[Section 3.2]{os:plumbed}).

Now, if $K = P(-3,5,7)$ were smoothly slice, then $\Sigma(K)$ would also bound a smooth rational homology ball $W$.  The union $Z = X \cup (-W)$ would then be a smooth, closed 4-manifold with $(H_2(Z;\bZ)/\mathrm{Tors},Q_Z)$ isometric to $D_{12}^+$, which does not admit an orthonormal basis, in violation of Donaldson's theorem.  Therefore, $K$ is not smoothly slice. Alternatively, following \cite[Section 3.2]{os:plumbed}, a calculation with $D_{12}^+$ shows that $d(Y,\t) = -2$, where $\t$ denotes the unique $\spc$ structure on $Y$.  Since $\t$ is the unique $\spc$ structure on $\Sigma(K)$, it extends over any smooth, compact 4-manifold filling $\Sigma(K)$.  On the other hand, if $\t$ extends to a $\spc$ structure on a rational homology ball $W$ that fills $Y$, then $d(Y,\t) = 0$ \cite[Proposition 9.9]{os:absgr}.  It follows once more that no rational homology ball fills $\Sigma(K)$, so $K$ is not smoothly slice.  

The two proofs that $P(-3,5,7)$ is not smoothly slice are both based on the existence of the 4-manifold $X$, properties of the $D_{12}^+$ lattice, and a suitably sensitive tool in smooth 4-manifold topology.  In fact, the result and both proofs generalize to any knot $K$ for which $\Sigma(K)$ bounds a 4-manifold with a positive definite intersection pairing not isometric to the Euclidean lattice $\bZ^n$.  The proof using Donaldson's theorem generalizes directly.  The proof using Heegaard Floer homology does as well, as the $d$-invariant of such a manifold is negative by \cite[Theorem 9.6]{os:absgr} and a theorem of Elkies \cite{elkies}.

In Section \ref{s: rational balls} we show that, in much the same way, two obstructions in the literature to a two-bridge knot being smoothly slice, one from Donaldson's theorem and one from the $d$-invariant, are equivalent.  In Section \ref{s: mutation} we highlight a novel instance in which Donaldson's theorem can be used in place of Heegaard Floer homology.  This possibility was pointed out by Stefan Friedl.  The main result of \cite{greene:2013-2} asserts that if $L$ and $L'$ are alternating links, then $\Sigma(L) \approx \Sigma(L')$ if and only if $L$ and $L'$ are mutants.  Moreover, the $d$-invariant of $\Sigma(L)$ is a complete invariant of the mutation type of $L$.  The argument hinges on an expression for the $d$-invariant of $\Sigma(L)$ due to Ozsv\'ath and Szab\'o \cite[Theorem 3.4]{os:doublecover}.  The expression is defined in terms of a lattice $\Lambda(D)$ associated with an alternating link diagram $D$ of $L$.  It follows from the invariance of the $d$-invariant that if $D'$ is an alternating link diagram of $L'$ and $\Sigma(L) \approx \Sigma(L')$, then the formulas for the $d$-invariant derived from $\Lambda(D)$ and $\Lambda(D')$ are the same.  Friedl's suggestion was to show that these formulas are the same by an appeal to Donaldson's theorem instead of Heegaard Floer homology.  We carry out the details of this suggestion in Section \ref{s: mutation}.


\section*{Acknowledgments.}

I am pleased to thank Stefan Friedl for the excellent suggestion he made during my lecture on \cite{greene:2013-2} at the conference ``The topology of 3-dimensional manifolds" at CRM in Montreal on May 16, 2013.  Thanks as well to Brendan Owens for many enlightening discussions on these topics over the years.  This work was partially supported by NSF CAREER Award DMS-1455132 and an Alfred P. Sloan Research Fellowship.


\section{Sliceness of two-bridge knots.}
\label{s: rational balls}

Let $Y$ denote an oriented rational homology 3-sphere, and suppose that $Y$ bounds an oriented rational homology 4-ball $W$.  As remarked above, Ozsv\'ath and Szab\'o showed that the invariant $d(Y,\t)$ vanishes for any $\spc$ structure $\t$ on $Y$ that extends across $W$.  A lot of work has gone into using this fact as an obstruction: given a rational homology sphere $Y$, one attempts to argue that it is not the boundary of {\em any} rational homology ball.  For instance, if {\em all} of the correction terms $d(Y,\t)$ are non-zero, then one concludes that $Y$ does not bound a rational ball, as we did above in the case of $\Sigma(3,5,7)$.  Variations on this theme are carried out in \cite{greenejabuka:2011}, \cite{grs:conc}, \cite{jn:conc}, \cite{ana:thesis}, \cite{lisca:lens1,lisca:lens2}, and \cite{owensstrle:2012}.

Most applications of this idea require somewhat more: one seeks more {\em a priori} conditions on which $\spc$ structures on $Y$ could extend over a putative rational ball, and to combine these conditions with the vanishing of the correction terms.  Casson and Gordon observed that the image of the restriction mapping $r_H: H^2(W;\bZ) \to H^2(Y;\bZ)$ is a subgroup $\im(r_H)$ whose order is the square-root of $|H^2(Y;\bZ)|$, implying the latter value is a perfect square $m^2$ \cite[Lemma 3]{cg:cobordism}.  This observation holds at the level of $\Spc$ structures: the image of the restriction mapping $r_S: \Spc(W) \to \Spc(Y)$ forms a torsor over the subgroup $\im(r_H)$.  Furthermore, there is a conjugation action on $\Spc(W)$ and $\Spc(Y)$ which commutes with the restriction map.
Thus, in order for $Y$ to bound a rational ball, the $d$-invariant vanish on a conjugation-invariant subtorsor of $\Spc(Y)$ over a subgroup of order $m$.  

In the application to knot concordance, we assume that $Y = \Sigma(K)$ for some knot $K \subset S^3$.  This has the added feature that $Y$ is a $\bZ /2 \bZ$ homology sphere, and if $K$ is smoothly slice, then $Y$ is filled by the smooth $\bZ / 2 \bZ$ homology ball $W = \Sigma(D)$, where $D$ denotes the slice disk. There exists a first Chern class mapping $c_1 : \Spc(\cdot) \to H^2(\cdot;\bZ)$ for both $Y$ and $W$.  It is a torsor isomorphism since $H^2(\cdot;\bZ)$ has no 2-torsion, it commutes with the restriction map, and $c_1(\overline{\s}) = - c_1(\s)$ for conjugate $\spc$ structures $\s, \overline{\s} \in \Spc(W)$.  If $H^2(Y;\bZ)$ is a cyclic group of odd order $m^2$, then it follows that $\im(c_1 \circ r_S)$ is the unique cyclic subgroup of order $m$.
This identifies $T := c_1^{-1}(m \cdot H^2(Y;\bZ)) \subset \Spc(Y)$ with $\im(r_S)$.  Thus, in order to argue that $K$ is not smoothly slice, it suffices to show that the $d$-invariant of $Y$ does not vanish on $T$.

We shall use the following lattice-theoretic description of $T$. Express $Y$ by integral surgery along a framed link $L \subset S^3 = \del D^4$.  Attaching 2-handles to $D^4$ along $L$ produces a 4-manifold $X$ with $\del X = Y$, $H_1(X;\bZ) = 0$, and whose intersection pairing $\Lambda = (H_2(X;\bZ),Q_X)$ is presented by the linking matrix of $L$.  Now glue $X$ and the putative $\bZ / 2 \bZ$ homology ball $-W$ by an orientation-reversing diffeomorphism of their boundaries to produce a closed 4-manifold $Z$.  By Poincar\'e duality, the intersection pairing lattice $\Lambda' = (H_2(Z;\bZ), Q_Z)$ is unimodular and integral, and the inclusion $X \into Z$ induces an inclusion $\Lambda \into \Lambda'$.  Since $H_1(X;\bZ) = 0$, every $\spc$ structure on $Y$ extends across $X$, so a $\spc$ structure on $Y$ extends across $W$ if and only if it extends across $Z$.  Since $H^1(X;\bZ/2\bZ)$ and $H^1(Z;\bZ/2\bZ)$ vanish, the first Chern class mapping $c_1$ establishes one-to-one correspondences $\Spc(X) \leftrightarrow \Char(\Lambda)$ and $\Spc(Z) \leftrightarrow \Char(\Lambda')$, the sets of characteristic elements for these lattices \cite[pp.56-57]{gs:4mflds}.  Under the first correspondence, the restriction mapping $\Spc(X) \to \Spc(Y)$ corresponds to the mapping $\Char(\Lambda) \to \cX(\Lambda)$, where $\cX(\Lambda)$ denotes the set of equivalence classes of $\Char(\Lambda)$ modulo $2 \Lambda$.  Under the identification $\Spc(Y) \leftrightarrow \cX(\Lambda)$, the set of $\spc$ structures on $Y$ that extend over $Z$ is precisely $\Char(\Lambda') \, (\mod 2 \Lambda)$.  Specializing to the case that $H^2(Y;\bZ)$ is cyclic of order $m^2$, it follows that $T$ can be identified with $\Char(\Lambda') \, (\mod 2 \Lambda)$.

As an application of these ideas, suppose that the two-bridge knot $S(p,q)$ is smoothly slice.  Then $\Sigma(K) \approx L(p,q)$ bounds a $\bZ /2 \bZ$ homology ball $W$.  The previous discussion implies that $p = m^2$ is odd and there exists a unique subset $T \subset \Spc(L(p,q))$ of order $m$ which can extend over $W$. Moreover, we have the following condition:
\begin{equation}\label{condition one}
d(L(p,q),\t) = 0, \, \forall \, \t \in T.
\end{equation}
\noindent Variations on this condition appear in \cite{grs:conc, jn:conc}, where it and its enhancements get used in order to bound the concordance orders of some 2-bridge knots.

On the other hand, the lens space $L(p,q)$ bounds a positive definite plumbing manifold $X(p,q)$ with intersection pairing lattice $\Lambda_1$, and $L(p,p-q)$ bounds a positive definite plumbing manifold $X(p,p-q)$ with intersection pairing lattice $\Lambda_2$.  Both $X(p,q) \cup (-W)$ and $X(p,p-q) \cup W$ are smooth, closed, definite 4-manifolds, so by Donaldson's theorem they have diagonalizable intersection pairing lattices $\bZ^{r_1}$ and $\bZ^{r_2}$, respectively, where $r_i = \rk(\Lambda_i)$.  We obtain the following condition:
\begin{equation}\label{condition two}
\Lambda_i \into \bZ^{r_i}, i = 1,2.
\end{equation}
\noindent This condition appears in the work \cite{lisca:lens1,lisca:lens2}.  By a remarkable combinatorial argument, Lisca showed that this condition is also sufficient.  Moreover, he used it to determine the concordance orders of {\em all} two-bridge knots.

Thus, \eqref{condition one} and \eqref{condition two} are two {\em a priori} different necessary conditions for a lens space to bound a rational ball.  In light of Lisca's result, it is clear that \eqref{condition two} is at least as strong as \eqref{condition one}.  It stands to wonder whether \eqref{condition one} could have been used towards the same conclusion.
In fact, as we now argue, conditions \eqref{condition one} and \eqref{condition two} are equivalent (independently of Lisca's result).

\begin{prop}
\label{p: equivalence}
Conditions \eqref{condition one} and \eqref{condition two} on a lens space are equivalent.
\end{prop}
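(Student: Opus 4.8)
The plan is to translate both conditions into statements about the characteristic covectors of the lattices $\Lambda_1$ and $\Lambda_2$, and to bridge them using Elkies' theorem \cite{elkies} together with the correspondence $T \leftrightarrow \Char(\Lambda')\,(\mod 2\Lambda)$ recorded above. The plumbings $X(p,q)$ and $X(p,p-q)$ are the standard positive definite linear plumbings bounding $L(p,q)$ and $L(p,p-q)$, and they are sharp in the sense of Ozsv\'ath and Szab\'o. Reversing orientation and applying their plumbing formula expresses the correction terms as lattice quantities:
\[ d(L(p,p-q),\mathfrak{t}) = \max_{[\chi]=\mathfrak{t}}\tfrac{r_1-\chi^2}{4} \quad\text{over } \chi \in \Char(\Lambda_1), \]
and symmetrically $d(L(p,q),\mathfrak{t})$ is computed from $\Char(\Lambda_2)$ and $r_2$. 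Since $L(p,q) = -L(p,p-q)$, the behaviour of the $d$-invariant under orientation reversal gives $d(L(p,q),\mathfrak{t}) = -d(L(p,p-q),\mathfrak{t})$; as $T$ is conjugation-invariant, the two families $\{d(L(p,q),\mathfrak{t})\}_{\mathfrak{t}\in T}$ and $\{d(L(p,p-q),\mathfrak{t})\}_{\mathfrak{t}\in T}$ vanish together. Thus \eqref{condition one} is equivalent, through the first formula, to the statement that every class of $T$ contains a characteristic covector of $\Lambda_1$ of norm $r_1$ and none of smaller norm, and, through the second, to the same statement for $\Lambda_2$. It therefore suffices to prove, for each $i$ separately, that this minimal-norm condition on $T$ is equivalent to $\Lambda_i \into \bZ^{r_i}$; the conjunction then matches \eqref{condition two}.

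Throughout I will use that, when $H^2(L(p,q);\bZ)$ is cyclic of odd order $m^2$, the subtorsor $T$ consists of those classes $[\chi]\in\cX(\Lambda)$ with $\chi\bmod\Lambda$ in the unique index-$m$ subgroup, equivalently with $m\chi\in\Lambda$. For the implication \eqref{condition two}$\Rightarrow$\eqref{condition one}, an embedding $\Lambda \into \bZ^{r}$ realizes $\bZ^{r}$ as a unimodular overlattice $\Lambda'\supseteq\Lambda$ whose glue group $\Lambda'/\Lambda$ is exactly this index-$m$ subgroup, so that $T=\Char(\bZ^{r})\,(\mod 2\Lambda)$ as in the discussion preceding the proposition. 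Each class of $T$ is then represented by an odd vector $w\in\bZ^{r}$, and the entire coset $w+2\Lambda\subset\bZ^{r}$ consists of characteristic vectors of $\bZ^r$, hence has norm $\ge r$; this yields the lower bound and shows $d\le 0$ on $T$. The matching upper bound amounts to producing a norm-$r$ representative, namely a $\pm1$-vector, in each such coset.

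For the reverse implication \eqref{condition one}$\Rightarrow$\eqref{condition two} I would run this construction backwards. The index-$m$ subgroup cut out by $T$ determines an overlattice $\Lambda \subseteq \Lambda' \subseteq \Lambda^*$; the minimal-norm hypothesis should force $\Lambda'$ to be integral and unimodular, i.e.\ that this subgroup is a metabolizer for the linking form. A norm-$r$ characteristic covector supplied by \eqref{condition one} then lies in $\Lambda'$ and is characteristic there, so $\Lambda'$ carries a characteristic vector of norm equal to its rank. By Elkies' theorem \cite{elkies} this forces $\Lambda'\cong\bZ^{r}$, and the inclusion $\Lambda\subseteq\Lambda'$ is the required embedding.

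The crux in both directions is the interplay between the combinatorics of the linear lattice $\Lambda_i$ and the standard lattice $\bZ^{r_i}$. In the reverse direction I must verify that the subgroup determined by $T$ is a metabolizer, so that the associated overlattice is genuinely unimodular and Elkies applies; in the forward direction I must establish the covering statement that every characteristic coset modulo $2\Lambda_i$ meets the set of $\pm1$-vectors. I expect these to be the main obstacles, and I anticipate that both rest on the special structure of the linear lattices coming from the continued fraction expansion of $p/q$ (in particular that the associated graph is a path with all weights at least $2$, so that the images of the standard generators form a rich enough family). The Floer-theoretic and homological input, by contrast, is entirely packaged into the two plumbing formulas of the first paragraph.
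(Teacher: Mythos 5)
Your direction \eqref{condition one} $\implies$ \eqref{condition two} is essentially the argument the paper gives: identify $T$ with $\Char(\Lambda_i')\ (\mod 2\Lambda_i)$ for the unimodular overlattice $\Lambda_i'$, deduce from the vanishing of $d$ on $T$ that $\Lambda_i'$ has no characteristic vector of norm less than $r_i$, and apply Elkies. Two corrections there. First, the item you flag as a main obstacle --- that the subgroup determined by $T$ is a metabolizer, so that $\Lambda_i'$ is integral and unimodular --- is automatic and needs no input from the minimal-norm hypothesis: the discriminant group is cyclic of odd order $m^2$, its unique subgroup of order $m$ consists of the multiples of $m$, and the linking form takes values in $\frac{1}{m^2}\bZ/\bZ$, so that subgroup is isotropic. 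Second, Elkies' theorem is triggered by the \emph{absence} of characteristic vectors of norm less than the rank, not by the \emph{presence} of one of norm equal to the rank (for instance $E_8 \oplus \bZ$ contains characteristic vectors of norm $9$ but is not $\bZ^9$); you should be invoking the ``none of smaller norm'' half of your minimal-norm condition, not the norm-$r$ representative.

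The genuine gap is in \eqref{condition two} $\implies$ \eqref{condition one}. You reduce to a per-lattice equivalence, correctly extract from the embedding $\Lambda_i \into \bZ^{r_i}$ the one-sided bound (each class of $T$ is a coset of characteristic vectors of $\bZ^{r_i}$, hence all of norm at least $r_i$), and then defer the matching bound --- that every such coset contains a $\pm 1$-vector --- as an open ``main obstacle.'' That covering statement is precisely the kind of hard combinatorics of linear lattices that appears in Lisca's work, and it is never needed. The paper's trick uses an identity you already wrote down in your first paragraph: the embedding of $\Lambda_1$ bounds $d(L(p,p-q),\t)$ on one side for all $\t \in T$, the embedding of $\Lambda_2$ bounds $d(L(p,q),\t)$ on the \emph{same} side, and the antisymmetry $d(L(p,q),\t) = -d(L(p,p-q),\t)$ then forces both to vanish, with no short representative ever produced. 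Moreover, your proposed per-lattice equivalence is stronger than the proposition requires (a single embedding need not force vanishing by itself; it is the conjunction over $i=1,2$ that does). As written, this direction of your argument is incomplete.
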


\begin{proof}
\eqref{condition one} $\implies$ \eqref{condition two}: As discussed, $T$ can be identified with $\Char(\Lambda_i') \, (\mod 2 \Lambda_i)$, where $\Lambda_i'$ is the unique unimodular lattice with $\Lambda_i \subset \Lambda_i' \subset \Lambda_i^*$.  By \eqref{condition one}, $d$ vanishes on this subset, so the $d$-invariant of $\Lambda_i'$ is 0.  By Elkies's theorem \cite{elkies}, it follows that $\Lambda_i' \approx \bZ^{r_i}$, so condition \eqref{condition two} holds.

\noindent \eqref{condition one} $\implies$ \eqref{condition two}: Since the $d$-invariant of $\bZ^{r_i}$ is zero, it follows from \eqref{condition two} that $d(L(p,q),\t)$ and $d(L(p,p-q),\t)$ are non-negative for all $\t \in \cT$.  On the other hand, $d(L(p,q),\t) = -d(L(p,p-q),\t)$, so both values vanish, and \eqref{condition one} holds.
\end{proof}

Observe that the statement and proof of Proposition \ref{p: equivalence} extends to any space $Y$ for which $H^2(Y;\bZ)$ is a cyclic group with order an odd perfect square and both $Y$ and $-Y$ bound positive definite 4-manifolds with vanishing $H_1$.  A similar but somewhat more complicated conclusion may be drawn without the assumptions on $H^2(Y;\bZ)$.


\section{Mutation of alternating links.}
\label{s: mutation}

\begin{figure}
\includegraphics{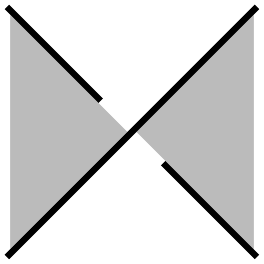}
\caption{Coloring convention for a connected alternating diagram.}
\label{f: crossings}
\end{figure}

Suppose that $D$ is a connected, reduced, alternating diagram of an alternating link $L \subset S^3$.  Color the regions of $D$ in chessboard fashion according to the convention shown in Figure \ref{f: crossings}. 
Let $G(D)$ denote the Tait graph whose vertices correspond to black regions of $D$ and whose edges correspond to crossings where a pair of regions touch.
Denote by $\Lambda(D)$ the lattice of flows on $G(D)$.  This is a positive definite, integral lattice.

The $d$-invariant of a positive definite, integral lattice was defined in \cite[Section 2.4]{greene:2013-2} (see also \cite[Section 2]{greene:2012}).  The definition is intended to mimic the formula of Ozsv\'ath and Szab\'o in \cite[Theorem 3.4]{os:doublecover}.  According to it, the $d$-invariant of $-\Sigma(L)$ is isomorphic to the $d$-invariant of the lattice $\Lambda(D)$ (the notion of isomorphism of $d$-invariants is codified in \cite[Definition 2.2]{greene:2013-2}).  As a consequence, if $D$ and $D'$ are connected, reduced alternating diagrams of a pair of links $L, L'$ for which $\Sigma(L) \approx \Sigma(L')$, then the $d$-invariants of $\Lambda(D)$ and $\Lambda(D')$ are isomorphic.  In other words, \cite[Theorem 3.4]{os:doublecover} implies the following result.

\begin{prop}
\label{p: well-defined}
The isomorphism type of the $d$-invariant of $\Lambda(D)$ is an invariant of the oriented homeomorphism type of $\Sigma(L)$.
\end{prop}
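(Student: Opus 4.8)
The plan is to deduce the statement directly from the Ozsv\'ath--Szab\'o formula \cite[Theorem 3.4]{os:doublecover} together with the fact that the Heegaard Floer $d$-invariant is itself an invariant of the oriented homeomorphism type of a closed oriented three-manifold. The content here is not a new computation but rather the observation that a lattice-theoretic quantity, the isomorphism type of the $d$-invariant of $\Lambda(D)$, inherits its invariance from the topological side of that identification; so the whole argument is a formal composition of isomorphisms.

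First I would recall that \cite[Theorem 3.4]{os:doublecover} asserts that the $d$-invariant of the three-manifold $-\Sigma(L)$, regarded as a function on $\Spc(-\Sigma(L))$, is isomorphic---in the precise sense of \cite[Definition 2.2]{greene:2013-2}---to the $d$-invariant of the lattice $\Lambda(D)$, via the correspondence $\Spc(-\Sigma(L)) \leftrightarrow \cX(\Lambda(D))$ induced by $c_1$. Next I would invoke the standard fact that the assignment $\t \mapsto d(Y,\t)$ is natural under orientation-preserving homeomorphisms: such a homeomorphism $Y \to Y'$ induces a bijection $\Spc(Y) \to \Spc(Y')$ under which the $d$-invariants correspond. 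Applying this to the homeomorphism $-\Sigma(L) \approx -\Sigma(L')$ induced by a given oriented homeomorphism $\Sigma(L) \approx \Sigma(L')$ shows that the $d$-invariants of $-\Sigma(L)$ and $-\Sigma(L')$ agree as functions on their respective $\Spc$ sets. Composing with the two isomorphisms supplied by \cite[Theorem 3.4]{os:doublecover} then identifies the $d$-invariants of $\Lambda(D)$ and $\Lambda(D')$, which is exactly the asserted invariance (and in particular shows independence of the chosen diagram when $L = L'$).

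The only genuine subtlety---and hence the main obstacle, such as it is---is bookkeeping rather than geometry: one must check that the notion of isomorphism of $d$-invariants codified in \cite[Definition 2.2]{greene:2013-2} records precisely the data that a homeomorphism preserves, namely the values of $d$ together with the affine $H^2(Y;\bZ)$-action on $\Spc(Y)$ and the conjugation symmetry, and that the correspondence $\Spc(-\Sigma(L)) \leftrightarrow \cX(\Lambda(D))$ furnished by \cite[Theorem 3.4]{os:doublecover} intertwines these structures with their lattice counterparts on $\cX(\Lambda(D))$. Once this compatibility is granted, no further work with the lattices themselves is required, and the proposition follows by transporting the homeomorphism invariance of $d(-\Sigma(L), \cdot)$ across the two identifications.
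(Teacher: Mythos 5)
Your argument is correct, but it is precisely the route the paper sets out to avoid. The paper itself concedes, in the sentences immediately preceding the statement, that Proposition \ref{p: well-defined} is an immediate consequence of \cite[Theorem 3.4]{os:doublecover} together with the homeomorphism invariance of the Heegaard Floer $d$-invariant --- which is exactly your proof. The point of Section \ref{s: mutation}, however, is Friedl's suggestion to reprove the proposition \emph{without} Heegaard Floer homology, using only Donaldson's diagonalization theorem. The paper's proof therefore runs differently: it glues the Gordon--Litherland fillings $X(\overline{D})$ and $X(D')$ along an orientation-reversing homeomorphism of their boundaries to get a closed definite $4$-manifold $Z$, applies Donaldson to conclude $(H_2(Z;\bZ),Q_Z) \approx \bZ^n$, and uses the bound $|\chi + \chi'| \ge n$ for characteristic vectors of $\bZ^n$ to show $d(x) + d(\varphi(x)) \ge 0$ for short representatives; repeating this for the pair $(D,\overline{D}')$ and invoking the purely combinatorial planar-duality identity $(\cX(\Lambda(D)),d) \approx (\cX(\Lambda(\overline{D})),-d)$ from \cite[Corollary 3.4]{greene:2013-2} forces all the inequalities to be equalities. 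Your approach buys brevity at the price of importing the full strength of \cite[Theorem 3.4]{os:doublecover} (i.e., the sharpness of $X(D)$, a substantial Heegaard Floer computation); the paper's approach buys logical independence from Heegaard Floer homology, replacing it with Donaldson's theorem plus elementary lattice combinatorics, which is the stated purpose of the section. If your goal were merely to verify the proposition as stated, your proof suffices; if the goal is the one announced in the paper, it is circular with respect to the machinery being eliminated.
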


This is the sole input from Heegaard Floer homology used in the proof of the main result of \cite{greene:2013-2} (specifically, see the use of \cite[Theorem 4.7]{greene:2013-2} in the proof of \cite[Theorem 1.1]{greene:2013-2}).  Following Friedl's suggestion, we will derive Proposition \ref{p: well-defined} as an application of Donaldson's theorem, without reference to Heegaard Floer homology.

Before proving Proposition \ref{p: well-defined}, we require some more preparation.  Let $F \subset S^3$ denote the spanning surface for $L$ corresponding to the black regions.  Note that $F$ deformation retracts onto $G(D)$.  Let $D^4$ denote a 4-ball that fills $S^3$ and push $\mathrm{int}(F)$ into $\mathrm{int}(D^4)$ to obtain a properly embedded surface $F'$ that fills $L$.  Let $X(D)$ denote the double-cover of $D^4$ branched along $F'$.  Its boundary is $\Sigma(L)$, and it follows from the work of Gordon and Litherland that $(H_2(X(D));\bZ)$, equipped with its intersection pairing, is isometric to $\Lambda(D)$ \cite[Theorems 1 \& 3]{gl:sig}.  The additional notation in the following proof comes from \cite{greene:2013-2} (cf. \cite{greene:2012}).

\begin{proof}
We must show that if $D$ and $D'$ are connected, reduced, alternating diagrams of a pair of links $L$ and $L'$, and $\Sigma(L) \approx \Sigma(L')$ as oriented manifolds, then $(\cX(\Lambda(D)),d) \approx (\cX(\Lambda(D')),d)$.

Let $\overline{D}$ denote the mirror of the diagram $D$ and $\overline{L}$ the mirror of the link $L$.  We have $\del X(\overline{D}) \approx \Sigma(\overline{L}) \approx -\Sigma(L) \approx -\Sigma(L') \approx - \del X(D')$.  Fix an orientation-reversing homeomorphism $\phi : \del X(\overline{D}) \to \del X(D')$.  Following the discussion in Section \ref{s: rational balls}, we can identify $\cX(\Lambda(D))$ and $\cX(\Lambda(D'))$ with $\Spc(\Sigma(L))$ (note that $H^2(X(D);\bZ)$ does not contain 2-torsion).  The map $\phi$ then descends to a torsor isomorphism
\[
\varphi : \cX(\Lambda(\overline{D})) \to \cX(\Lambda(D')).
\]
We seek to show that $\varphi$ establishes an isomorphism between the $-d$-invariant of $\Lambda(\overline{D})$ and the $d$-invariant of $\Lambda(D')$: that is, $d(\varphi(x)) = -d(x)$ for all $x \in \cX(\Lambda(\overline{D}))$.

Form the closed, oriented, smooth, definite 4-manifold $Z = X(\overline{D}) \cup_\phi X(D')$.  By Donaldson's theorem, $(H_2(Z;\bZ),Q_Z)$ is isometric to the Euclidean lattice $\bZ^n$, where $n = \rk(\Lambda(\overline{D}))+\rk(\Lambda(D'))$.
Given a class $x \in \cX(\Lambda(\overline{D}))$, select $\chi \in \Short(\Lambda(\overline{D}))$ with $[\chi] = x$ and $\chi' \in \Short(\Lambda(D'))$ with $[\chi'] = \varphi(x)$.  Then $\chi + \chi' \in \Char(H_2(Z)) = \Char(\bZ^n)$, so $|\chi + \chi'| \ge n$.  It follows that
\begin{eqnarray*}
d(x) + d(\varphi(x)) &=& \frac 14(|\chi| -  \rk(\Lambda(\overline{D}))) + \frac 14(|\chi'| -  \rk(\Lambda(D'))) \\
&\ge& 0, \quad \forall x \in \cX(\Lambda(\overline{D})).
\end{eqnarray*}

Similarly, consideration of the pair $(D,\overline{D}')$ yields a torsor isomorphism
\[
\psi : \cX(\Lambda(D)) \to \cX(\Lambda(\overline{D}'))
\]
with the property that
\[
d(y) + d(\psi(y)) \ge 0,  \quad \forall y \in \cX(\Lambda(D)).
\]
Adding the two collections of inequalities above over all $x$ and $y$ shows that the sum of the total $d$-invariants of $\Lambda(D), \Lambda(D'), \Lambda(\overline{D})$, and $\Lambda(\overline{D}')$ is non-negative.   

On the other hand, since $G(D)$ and $G(\overline{D})$ are planar dual to one another, 
there exists an isomorphism
\[
(\cX(\Lambda(D)),d) \approx (\cX(\Lambda(\overline{D})),-d)
\]
by \cite[Corollary 3.4]{greene:2013-2}.  Similarly, there exists an isomorphism between $(\cX(\Lambda(D')),d)$ and $(\cX(\Lambda(\overline{D}')),-d)$.
Thus, the sum of the total $d$-invariants of the four lattices vanishes, and it follows that $d(x) + d(\varphi(x)) = 0, \forall x \in \cX(\Lambda(D))$.  Therefore, $\varphi$ establishes an isomorphism
\[
(\cX(\Lambda(\overline{D})),-d) \approx (\cX(\Lambda(D')),d).
\]
Combining the last two indented expressions, it follows that the $d$-invariants of $\Lambda(D)$ and $\Lambda(D')$ are isomorphic, as desired.
\end{proof}

It is intriguing to consider the role of Donaldson's theorem in the proof of Proposition \ref{p: well-defined}.  Is it possible to supply a more direct topological argument that $(H_2(Z,\bZ),Q_Z) \approx \bZ^n$ for the specific type of 4-manifold $Z$ appearing in it?

Lastly, we point out another line of argument for establishing \cite[Theorem 3.4]{os:doublecover}.  It is sufficient to show that the 4-manifold $X(D)$ is sharp.  
 By the combinatorial argument \cite[Corollary 3.4]{greene:2013-2}, the $d$-invariants of $\Lambda(D)$ and $\Lambda(\overline{D})$ are negative one another.  The first gives a lower bound on the $d$-invariant of $\Sigma(L)$, while the second gives a lower bound on the $d$-invariant of $\Sigma(\overline{L})$, both by \cite[Theorem 9.6]{os:absgr}.  The $d$-invariants of these manifolds are negative of one another by orientation-reversal.  It follows that both lower bounds are sharp, so $X(D)$ is sharp.

\bibliographystyle{myalpha}
\bibliography{/Users/JoshuaGreene/Dropbox/Papers/References}

\end{document}